\documentclass[a4paper,11pt]{amsart}

\usepackage{amsmath,amssymb,amsthm,mathtools}
\usepackage{microtype}
\usepackage[colorlinks=true,linkcolor=blue,citecolor=blue,urlcolor=blue]{hyperref}

\allowdisplaybreaks
\numberwithin{equation}{section}
\newtheorem{theorem}{Theorem}[section]
\newtheorem{lemma}[theorem]{Lemma}
\newtheorem{corollary}[theorem]{Corollary}
\newtheorem{remark}[theorem]{Remark}
\theoremstyle{definition}
\newtheorem{conjecture}[theorem]{Conjecture}

\newcommand{\Sph}{\mathbb S}
\newcommand{\R}{\mathbb R}
\newcommand{\tr}{\operatorname{tr}}

\title[Lu's conjecture for minimal surfaces in codimension two]
{Lu's conjecture for minimal surfaces in codimension two}

\author[J. Q. Ge]{Jianquan Ge}
\address{School of Mathematical Sciences, Laboratory of Mathematics and Complex Systems, Beijing Normal University, Beijing 100875, P. R. China}
\email{jqge@bnu.edu.cn}

\author[F. G. Li]{Fagui Li}
\address{Frontier Interdisciplinary Domain, Beijing Institute of Technology, Zhuhai, Guangdong 519088, P. R. China}
\email{lifagui@bitzh.edu.cn}

\author[Y. H. Zhang]{Yunheng Zhang$^{*}$}
\address{$^{*}$School of Mathematical Sciences, Laboratory of Mathematics and Complex Systems, Beijing Normal University, Beijing 100875, P. R. China}
\email{yunheng@mail.bnu.edu.cn}
\date{}
\subjclass[2020]{53C20, 53C24, 53C42}
\keywords{Lu's conjecture, minimal surfaces, rigidity theorem}
\thanks{$^{*}$ Corresponding author.}
\thanks{J. Q. Ge is partially supported by NSFC (No. 12571049) and the Fundamental Research Funds for the Central Universities.}
\thanks{F. G. Li is partially supported by NSFC (Nos. 12271040 and 12501061), the Guangdong Provincial Association for Science and Technology Youth Talent Support Program (No. SKXRC2026413) and the Research Start-up Funding of Beijing Institute of Technology (No. 5640011253301).}
\begin{document}

\begin{abstract}
Let $M^2\to\Sph^4$ be a closed minimal immersion, let $S$ be the
squared norm of its second fundamental form, and let
$\lambda_1\geq\lambda_2\geq0$ be the eigenvalues of Lu's fundamental matrix.
We classify all such immersions for which $S+\lambda_2$ is constant.  We
prove that the constant can only be $0$ or $2$.  In the first case the image
is a totally geodesic $2$-sphere; in the second case it is either a Clifford
torus in a totally geodesic $\Sph^3$ or the Veronese surface in $\Sph^4$.
In particular, there is no closed minimal surface in $\Sph^4$ with constant
$S+\lambda_2>2$.  Consequently, Lu's second-gap conjecture holds for
minimal surfaces in codimension two.  Together with the hypersurface result
of Peng--Terng and the counterexamples of Li--Zhao in every codimension
$m\geq3$, this completes the codimension picture for minimal surfaces.
\end{abstract}

\maketitle

\section{Introduction}

Let $M^n$ be a closed minimal submanifold of the unit sphere
$\Sph^{n+m}$. 
Unless explicitly stated otherwise, all submanifolds in this paper are connected and
have no boundary. 
 With respect to a local orthonormal normal frame
$\{e_\alpha\}_{\alpha=n+1}^{n+m}$, let $A^\alpha$ denote the corresponding
shape operators.  Lu \cite{Lu11} introduced the fundamental matrix
\[
        \mathcal A
        =\bigl(\langle A^\alpha,A^\beta\rangle\bigr)_{m\times m}.
\]
If
$\lambda_1\geq\lambda_2\geq\cdots\geq\lambda_m\geq0$
are the eigenvalues of $\mathcal A$, then
\[
        S=\tr\mathcal A
          =\lambda_1+\cdots+\lambda_m.
\]  
The renowned DDVV conjecture, proposed by De Smet, Dillen,
Verstraelen, and Vrancken \cite{DDVV}, asserts that for an immersed submanifold of a real space form with constant sectional curvature $c$, the following pointwise inequality holds: 
\[\rho+\rho^{\perp}\leq|H|^2+c,\]
where $\rho$ denotes the normalized scalar curvature, $\rho^{\perp}$ denotes the normalized normal scalar curvature, and $H$ denotes the normalized mean curvature vector field.
By the Gauss equation
and the Ricci equation, this is equivalent to the following algebraic inequality:		 
		\begin{equation*}
			\sum_{r,s=1}^{m}\|[B_{r},B_{s}]\|^2\leq(\sum_{r=1}^{m}\|B_{r}\|^2)^2,
		\end{equation*} 
	where $B_{1},\ldots,B_{m}$ are $n\times n$ real symmetric matrices, $\|\cdot\|^2$ denotes the Frobenius norm and $[B_{r},B_{s}]:=B_{r}B_{s}-B_{s}B_{r}$ is the commutator of the matrices $B_{r}$ and $B_{s}$. The DDVV inequality was independently proved by Ge and Tang \cite{GT} and by Lu \cite{Lu11}; see also the survey \cite{GLTZ} and the related work \cite{GLZ} for more details. Later, Lu used a generalized DDVV inequality for commutators of the shape operators and derived the following first-gap theorem involving the refined curvature quantity $S+\lambda_2$, which interpolates between the hypersurface and higher-codimensional pinching problems.
\begin{theorem}[Lu's first-gap theorem \cite{Lu11}]
Let $M^n$ be a closed minimal submanifold of $\Sph^{n+m}$.  If
\[
        0\leq S+\lambda_2\leq n,
\]
then $M$ is totally geodesic, a Clifford torus in a totally geodesic
$\Sph^{n+1}$, or the Veronese surface in a totally geodesic $\Sph^4$.
\end{theorem}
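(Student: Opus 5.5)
We follow Simons' integral method, replacing the DDVV-type estimate by a sharper commutator bound that produces $\lambda_2$ instead of $S$. We start from Simons' identity for a minimal submanifold of $\Sph^{n+m}$:
\[
\tfrac12\Delta S=|\nabla h|^2+nS-\sum_{\alpha,\beta}\|[A^\alpha,A^\beta]\|^2-\sum_{\alpha,\beta}\langle A^\alpha,A^\beta\rangle^2 .
\]
Fix a point and rotate the normal frame so that $\mathcal A$ is diagonal, $\langle A^\alpha,A^\beta\rangle=\lambda_\alpha\delta_{\alpha\beta}$. The last term then equals $\sum_\alpha\lambda_\alpha^2$.

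The key algebraic step is Lu's generalized DDVV inequality:
\[
\sum_{\alpha,\beta}\|[A^\alpha,A^\beta]\|^2+\sum_\alpha\lambda_\alpha^2\le\Bigl(\sum_\alpha\lambda_\alpha\Bigr)^2+\lambda_2\sum_\alpha\lambda_\alpha=S(S+\lambda_2).
\]
To prove it, we would use the Böttcher--Wenzel-type estimate $\|[B,C]\|^2\le2\|B\|^2\|C\|^2$, sharpened to a weighted version in the spirit of Ge--Tang's proof of DDVV. The target is
\[
\sum_{\alpha\ne\beta}\|[A^\alpha,A^\beta]\|^2\le\sum_{\alpha\ne\beta}\lambda_\alpha\lambda_\beta+\lambda_2S .
\]
For pairs not involving index $1$, the bound $2\lambda_\alpha\lambda_\beta$ already fits inside the allowance, using $\lambda_\beta\le\lambda_2$. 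The pairs $(1,\beta)$ need the normal-frame optimization: only one commutator direction can be large, and this leaves the room $\lambda_2 S$. This step is the main obstacle. The naive pair bound gives $2\lambda_1\lambda_\beta$, which is too large. One must use the DDVV-style argument of Lu, a quadratic form on $\mathfrak{so}(n)$-valued tuples whose maximal eigenvalue is controlled, and exploit the frame diagonalizing $\mathcal A$.

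Granting the inequality, Simons' identity gives
\[
\tfrac12\Delta S\ge|\nabla h|^2+S(n-S-\lambda_2).
\]
Integrating over closed $M$ under $S+\lambda_2\le n$ yields $\nabla h\equiv0$ and $S(n-S-\lambda_2)\equiv0$. If $S\equiv0$, then $M$ is totally geodesic. Otherwise $S$ is a positive constant, since $\nabla h=0$, so $S+\lambda_2=n$ and equality holds in the algebraic inequality at every point. We then analyze the equality cases. One case is $\lambda_2=0$, so that effectively only one shape operator is nonzero; then parallel second fundamental form in codimension one with $S=n$ gives, after reducing codimension via $\nabla h=0$, a Clifford torus in a totally geodesic $\Sph^{n+1}$. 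The other case is two nonzero operators, which equality forces to be of the form $\lambda\,\mathrm{diag}$-block $2\times2$ matrices with $n=2$ and $\lambda_1=\lambda_2=S/2$, so $S=4/3$. Together with parallel $h$, this is the Veronese surface in a totally geodesic $\Sph^4$. For the reduction of codimension we invoke Erbacher's theorem, since the first normal space is parallel when $\nabla h=0$.

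We expect the equality analysis to be straightforward once the sharp algebraic inequality is in hand. The real work is proving that inequality with the exact constant, in particular controlling the mixed terms with the largest eigenvalue.
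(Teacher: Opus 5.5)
The paper does not prove this statement; it quotes it from Lu. Your argument therefore has to stand on its own, and its central step fails: the inequality you grant,
\[
  \sum_{\alpha,\beta}\|[A^\alpha,A^\beta]\|^2+\sum_\alpha\lambda_\alpha^2\le S(S+\lambda_2),
\]
is false. Take $n=m=2$, $A^3=\mathrm{diag}(1,-1)$ and $A^4=\begin{pmatrix}0&t\\ t&0\end{pmatrix}$ with $0<t<1$. These are traceless and orthogonal, with $\lambda_1=2>\lambda_2=2t^2$; pad with zeros for larger $n$ or $m$. Since $[A^3,A^4]=\begin{pmatrix}0&2t\\ -2t&0\end{pmatrix}$, the left side is $4+16t^2+4t^4$. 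The right side is $S(S+\lambda_2)=4+12t^2+8t^4$, so the left side exceeds it by $4t^2(1-t^2)>0$.

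More generally, every orthogonal pair of traceless symmetric $2\times2$ matrices attains B\"ottcher--Wenzel equality $\|[A^3,A^4]\|^2=2\lambda_1\lambda_2$. The left side then equals $S^2+2\lambda_1\lambda_2$ and exceeds $S(S+\lambda_2)$ by $\lambda_2(\lambda_1-\lambda_2)$. This is exactly the pair $(1,\beta)$ you flagged as the obstacle. Diagonalizing $\mathcal A$ leaves no room there, so no DDVV-type quadratic-form argument can supply the estimate.

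The defect is not an artifact of a lossy estimate. For every minimal surface in $\Sph^4$, Simons' identity reads exactly $\tfrac12\Delta S=|\nabla h|^2+2S-S^2-2\lambda_1\lambda_2$. Under $S+\lambda_2\le 2$ the zeroth-order term is only bounded below by $\lambda_2(\lambda_2-\lambda_1)$, which can be negative. For instance, $\lambda_1=\tfrac32$, $\lambda_2=\tfrac15$ give $S+\lambda_2=1.9$ but $2S-S^2-2\lambda_1\lambda_2=-0.09$. So integrating Simons' identity cannot yield $\nabla h\equiv0$. A global ingredient is missing, and for general $(n,m)$ your proposal does not establish the theorem.

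In the case $(n,m)=(2,2)$, the only one the paper uses, such an ingredient is available. Work on the oriented double cover. There $\lambda_1-\lambda_2$ is a constant multiple of the pointwise norm of the holomorphic quartic differential $\langle\sigma(\partial_z,\partial_z),\sigma(\partial_z,\partial_z)\rangle\,dz^4$. Moreover $K=1-\tfrac S2\ge\tfrac{\lambda_2}{2}\ge0$.
\begin{itemize}
\item If the differential is not identically zero, the genus is at least one. Gauss--Bonnet then forces $K\equiv0$, hence $\lambda_2\equiv0$ and $S\equiv2$. The rank-one lemma of Section~4 together with Chern--do Carmo--Kobayashi gives the Clifford torus.
\item If the differential vanishes identically, then $\lambda_1\equiv\lambda_2$ and Simons' identity becomes $\tfrac12\Delta S=|\nabla h|^2+S(2-\tfrac32 S)$ with $S\le\tfrac43$. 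This yields the totally geodesic sphere or the Veronese surface.
\end{itemize}
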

The above pinching theorem generalizes earlier pinching theorems of Simons \cite{Sim}, Chern, do Carmo, and Kobayashi \cite{CDK}, and Li and Li \cite{1992 LiLi}. Motivated by a classical gap theorem established by Peng and Terng \cite{Peng-T1},
Lu also proposed the following second-gap conjecture.
\begin{conjecture}[Lu's second-gap conjecture \cite{Lu11}]\label{conj:lu-second-gap}
For every pair $(n,m)$ there exists $\varepsilon(n,m)>0$ with the following
property.  If $M^n$ is a closed minimal submanifold of $\Sph^{n+m}$ and
$S+\lambda_2$ is constant with $S+\lambda_2>n$, then
\[
        S+\lambda_2>n+\varepsilon(n,m).
\]
\end{conjecture}
Indeed, the case $m=1$ was settled by Peng and Terng \cite{Peng-T1} and is closely related to the famous Chern conjecture: \emph{for a closed immersed minimal hypersurface $M^n$ in the unit sphere $\mathbb{S}^{n+1}$ with constant scalar curvature $R_M$, the set of all possible values for $R_M$ is discrete for each $n$}. In 1993, Chang \cite{C2} proved that \emph{a closed minimally immersed hypersurface with constant scalar curvature in $\mathbb{S}^{4}$ is isoparametric}, thereby resolving the stronger version of the Chern conjecture for $n=3$. Tang--Wei--Yan \cite{T-W-Y} and Tang--Yan \cite{T-Y} established strong sufficient conditions for hypersurfaces with prescribed higher mean curvatures or trace invariants to be isoparametric. More recently, under additional assumptions, Ge--Tan--Yan--Zhang \cite{GeTanYanZhang25}, He--Xu--Zhao \cite{HeXuZhao26}, Tao \cite{Tao26}, Ge--Liu--Luo--Yan \cite{GLLY} and Deng--Kou \cite{DengKou2026} have also made progress on the Chern conjecture. In higher codimension, Ge--Li--Zhang \cite{GeLiZhang2026} obtained an explicit second-gap rigidity theorem under the flat-normal-bundle assumption.

Very recently, Ding--Ge--Li--Yang \cite{DingGeLiYang2026} 
proved Lu's conjecture for minimal $2$-spheres and for general surfaces under mild assumptions on the normal scalar curvature. In particular, they obtained the following result.
\begin{theorem}[\cite{DingGeLiYang2026}]
	Let $M^2$ be a closed minimal sphere immersed in $\mathbb{S}^{2+m}$.
	\begin{enumerate}
		\item[(1)] If $S + \lambda_{2}$ is constant, then $S+\lambda_{2}=\frac{3(s-1)(s+2)}{s(s+1)}$, where $s$ is a positive integer, the curvature $K$ is constant and the submanifold is one of
		Calabi's minimal $2$-spheres \cite{Calabi1967,DoCarmoWallach1971}.
		\item[(2)] If $S+\lambda_2>2$, then
\[
        \max_{p\in M}(S+\lambda_2)(p)\geq\frac{5}{2}.
\]
	\end{enumerate}
\end{theorem}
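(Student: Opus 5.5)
Throughout, $M^2$ is a closed minimal immersed sphere in $\Sph^{2+m}$. The plan is to turn $S+\lambda_2$ into the Gauss curvature $K$, using the fact that minimal $2$-spheres are superminimal, and then to invoke the classical rigidity and area theory for such spheres.

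\emph{Reduction to $K$.} In a local orthonormal frame write $A^\alpha=\begin{pmatrix}a_\alpha&b_\alpha\\ b_\alpha&-a_\alpha\end{pmatrix}$ and set $a=(a_\alpha)$, $b=(b_\alpha)\in\R^m$. Then $\langle A^\alpha,A^\beta\rangle=2(a_\alpha a_\beta+b_\alpha b_\beta)$, so $\mathcal A$ has rank at most $2$. Its two possibly nonzero eigenvalues satisfy
\[
\lambda_1+\lambda_2=S,\qquad \lambda_1\lambda_2=4\bigl(|a|^2|b|^2-(a\cdot b)^2\bigr).
\]
On a minimal sphere the quartic holomorphic differential $\langle h,h\rangle\,dz^4$, with $h=a-\sqrt{-1}\,b$, vanishes because $M$ has genus zero. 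Hence $|a|=|b|$ and $a\cdot b=0$ everywhere, so $\lambda_1=\lambda_2=S/2$. By the Gauss equation $S=2(1-K)$, which gives the pointwise identity
\[
S+\lambda_2=\tfrac32 S=3(1-K).
\]

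\emph{Part (1).} If $S+\lambda_2$ is constant, the identity above makes $K$ constant. By Calabi's rigidity for minimal spheres of constant curvature (\cite{Calabi1967,DoCarmoWallach1971}), $K=\frac{2}{s(s+1)}$ for a positive integer $s$, and $M$ is the corresponding Calabi (Veronese-type) sphere, full in a totally geodesic $\Sph^{2s}$. Substituting,
\[
3\Bigl(1-\frac{2}{s(s+1)}\Bigr)=\frac{3(s-1)(s+2)}{s(s+1)}.
\]
As a check, $s=1$ gives the totally geodesic sphere (value $0$) and $s=2$ gives the Veronese surface (value $2$).

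\emph{Part (2).} The identity shows that $\max(S+\lambda_2)\ge \frac52$ is equivalent to $\min K\le\frac16$. Since the maximum exceeds $2$, Lu's first-gap theorem rules out the totally geodesic sphere and the Veronese surface. I would bound $\min K$ by the average curvature $4\pi/\mathrm{Area}(M)$ from Gauss--Bonnet, so it suffices to show $\mathrm{Area}(M)\ge 24\pi$. Let $\Sph^{2k}$ be the smallest totally geodesic sphere containing $M$, and use Barbosa's area quantization $\mathrm{Area}=4\pi d$ with $d\ge k(k+1)/2$, obtained from the harmonic sequence and Plücker-type formulas.
\begin{itemize}
\item If $k\ge3$, then $\mathrm{Area}\ge 24\pi$ directly, so $\min K\le\frac16$.
\item If $k=2$, the Veronese surface is the case $d=3$ and has been excluded, so $d\ge4$.
\end{itemize}

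\emph{Main obstacle.} The difficulty is the case $k=2$ with $d=4$ or $d=5$, where the average of $K$ only gives $\min K\le\frac14$ or $\frac15$. There I would not rely on averaging. Instead I would use the Laplacian identity satisfied by $\log(1-K)$, or equivalently by the normal curvature, for superminimal surfaces in $\Sph^4$. Integrating it, with the zeros of the relevant holomorphic sections entering as positive point masses, gives extra relations between the area and the ramification data of the twistor lift. I would then expect one of two outcomes: either these relations force $K$ below $\frac16$ at some point, or they show that twistor degrees $d=4,5$ cannot occur. The second outcome would use Loo's and Bryant's description of the moduli of superminimal spheres in $\Sph^4$. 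Settling this low-degree case is where I expect the real work to lie.
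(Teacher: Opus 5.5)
The paper only quotes this theorem from Ding--Ge--Li--Yang and gives no proof, so your argument is judged on its own. Your reduction is correct: the quartic differential vanishes on a sphere, so $\lambda_1=\lambda_2=S/2$ and $S+\lambda_2=3(1-K)$. Part (1) then follows from Calabi--do Carmo--Wallach exactly as you say. In Part (2), spheres full in $\Sph^{2k}$ with $k\ge3$ are handled by Barbosa's bound and Gauss--Bonnet.

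The gap is the case $k=2$, and it is larger than you indicate.
\begin{itemize}
\item \emph{$d=3$ does not single out the Veronese surface.} Equality in Barbosa's bound is attained by a noncongruent family. Take the twistor projections of the horizontal twisted cubics $[1:-\kappa z^3:\sqrt3 z:\sqrt3\kappa z^2]$, $\kappa>0$, horizontal for $z_0dz_1-z_1dz_0+z_2dz_3-z_3dz_2$. Their induced metric is a constant multiple of $\partial\bar\partial\log(1+3t+3\kappa^2t^2+\kappa^2t^3)\,|dz|^2$ with $t=|z|^2$, and this is round only for $\kappa=1$.
\item \emph{Degrees $4$ and $5$ do occur.} Bryant's formula gives, e.g., $[1:-z^4:z:2z^3]$, so your second hoped-for outcome is impossible.
\item \emph{You only use $\max(S+\lambda_2)>2$, and under that reading the statement is false.} For $\kappa\neq1$ close to $1$, the spheres above have nonconstant $K$ with mean $\tfrac13$, uniformly close to $\tfrac13$. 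Hence $\max(S+\lambda_2)=3(1-\min K)\in(2,\tfrac52)$.
\end{itemize}
So no averaging or degree-counting argument can close this case. The hypothesis must be used pointwise: $K<\tfrac13$ everywhere on $M$.

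With the pointwise hypothesis, $k=2$ becomes vacuous, using exactly the identity you mention. For a minimal sphere in $\Sph^4$, take the frame of Lemma~\ref{lem:superminimal} wherever $a>0$, but without assuming $a$ is constant. The Codazzi relations \eqref{eq:codazzi-a-b} with $b=a$ give $\omega_{34}-2\omega_{12}=*d\log a$. Taking $d$ and using $K^\perp=2a^2=1-K$ yields $\Delta\log(1-K)=2\Delta\log a=2(3K-1)$. If $K<\tfrac13$ on $M$, then $1-K>\tfrac23$, so $\log(1-K)$ is smooth on all of $M$. At a maximum point of $K$ one then gets $0\le\Delta\log(1-K)=2(3K-1)<0$, a contradiction. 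If zeros are allowed, integrating the same identity gives $\mathrm{Area}=12\pi+2\pi N$, where $N$ counts geodesic points with multiplicity; so every full sphere in $\Sph^4$ with $d\ge4$ has points where $S+\lambda_2=0$. Hence $M$ is full in some $\Sph^{2k}$ with $k\ge3$. There your area argument gives $\min K\le\tfrac16$, i.e.\ $\max(S+\lambda_2)\ge\tfrac52$, with equality for the Calabi sphere with $s=3$.
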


%For minimal two-spheres, Ding--Ge--Li--Yang \cite{DingGeLiYang2026}
%proved that constancy of $S+\lambda_2$ forces the Gauss curvature to be
%constant and hence yields one of Calabi's minimal two-spheres.  They also
%proved that, if $S+\lambda_2>2$ on a minimal two-sphere, then
%\[
%        \max_M(S+\lambda_2)\geq\frac52.
%\]
On the other hand, Li--Zhao \cite{LiZhao2026} constructed closed
embedded counterexamples in every codimension $m\geq3$.  More precisely, in
every odd codimension $m\geq3$, they obtained linearly full embedded flat
minimal tori whose constant values of $S+\lambda_2$ are dense in the interval $(2,3)$.
For every even codimension $m\geq4$, counterexamples follow by composing the
codimension-three examples with a totally geodesic inclusion into a higher
dimensional sphere; the added normal directions have zero shape operators,
so $S$ and $\lambda_2$ are unchanged.  These even-codimensional examples are
not linearly full in the larger ambient sphere.  Thus, among minimal surfaces,
the only codimension not excluded by the known counterexamples is codimension
two, namely minimal surfaces in $\mathbb{S}^4$.

Our main result gives a complete classification in this remaining
codimension.

\begin{theorem}\label{thm:main}
Let $M^2$ be a closed minimal surface immersed in $\mathbb{S}^{4}$ and suppose that
\[
        S+\lambda_2\equiv c
\]
for a constant $c$.  Then $c\in\{0,2\}$.  More precisely, exactly one of the following occurs:
\begin{enumerate}
\item[(i)] $c=0$ and $M$ is a totally geodesic $2$-sphere;
\item[(ii)] $c=2$ and $M$ is a Clifford torus in a totally geodesic
$\Sph^3\subset\Sph^4$;
\item[(iii)] $c=2$ and $M$ is the Veronese surface in $\Sph^4$.
\end{enumerate}
\end{theorem}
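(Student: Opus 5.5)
The plan is to work with the geometry of the curvature ellipse and the holomorphic quartic differential of a minimal surface in $\Sph^4$, and to express everything in terms of $|\Phi|$. At a point, the shape operators $A^3,A^4$ are traceless symmetric $2\times2$ matrices. Identify each with a vector in $\R^2$ via $(h^\alpha_{11},h^\alpha_{12})$. Then $\mathcal A=2\,(\langle a_\alpha,a_\beta\rangle)$ is twice a Gram matrix, and its eigenvalues are $\lambda_1=2\mu^2$, $\lambda_2=2\nu^2$, where $\mu\geq\nu\geq0$ are (up to normalization) the semi-axes of the curvature ellipse. In particular, the normal curvature is $K_N\propto\mu\nu$ and $\lambda_1\lambda_2\propto K_N^2$.

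In a local complex coordinate $z$, the quartic differential $\Phi=\langle\sigma(\partial_z,\partial_z),\sigma(\partial_z,\partial_z)\rangle dz^4$ is holomorphic by the Codazzi equation. Its pointwise norm satisfies $|\Phi|\propto\mu^2-\nu^2=(\lambda_1-\lambda_2)/2$. The Gauss equation gives $K=1-S/2$. Hence the hypothesis $S+\lambda_2\equiv c$ yields
\[
S=\tfrac{2c+\kappa|\Phi|}{3},\qquad \lambda_2=\tfrac{c-\kappa|\Phi|}{3}
\]
for a universal constant $\kappa>0$. Thus $S$, $K$, $\lambda_2$ and $K_N$ all become explicit functions of the single quantity $|\Phi|$.

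Case $\Phi\equiv0$: the ellipse is a circle, so $\lambda_1=\lambda_2=S/2$ and $S=2c/3$ is constant. Then $K$ is constant, and a minimal surface of constant curvature in $\Sph^4$ is either totally geodesic ($c=0$, case (i)) or the Veronese surface ($K=1/3$, $S=4/3$, $c=2$, case (iii)). The flat case is impossible here: $K=0$ would force $\lambda_2=0$ together with $\lambda_1=\lambda_2$, hence $S=0$, a contradiction. This uses only the classical classification of constant-curvature minimal surfaces, which I will quote.

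Case $\Phi\not\equiv0$: the zeros of $\Phi$ are isolated. Away from them,
\[
\Delta\log|\Phi|=4K=4-2S=4-\tfrac{4c+2\kappa|\Phi|}{3}.
\]
The second ingredient is the structure equation for the normal curvature, namely the standard formula for $\Delta\log(\mu\pm\nu)$, equivalently $\Delta\log\bigl((1-K)\pm K_N\bigr)$, for minimal surfaces in $\Sph^4$. Writing $K_N$ as a function of $|\Phi|$ through $K_N^2\propto\lambda_1\lambda_2=(S-\lambda_2)\lambda_2$, and substituting the chain rule $\Delta f(|\Phi|)=f'\Delta|\Phi|+f''|\nabla|\Phi||^2$, I obtain two equations for $\Delta|\Phi|$ and $|\nabla|\Phi||^2$ with coefficients polynomial in $|\Phi|$ and $c$. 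Eliminating $\Delta|\Phi|$ gives $P(|\Phi|)\,|\nabla|\Phi||^2=Q(|\Phi|)$ for explicit polynomials $P,Q$. Differentiating once more, or applying the maximum principle at the maximum and minimum points of $|\Phi|$ where $\nabla|\Phi|=0$ (so $Q=0$ there), should force $|\Phi|$ to be locally constant on an open dense set unless $Q\equiv0$ as a polynomial. One then checks that $Q\equiv0$ is incompatible with $c>0$.

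With $|\Phi|$ constant and nonzero, $\Phi$ has no zeros, so the genus is $1$ by the zero count $4(g-1)$. Then $\Delta\log|\Phi|=0$ gives $K\equiv0$ and $S\equiv2$, and the normal-curvature equation gives $K_N\equiv0$, hence $\lambda_2\equiv0$ and $c=2$. Flat normal bundle with $S=2$ means the surface lies in a totally geodesic $\Sph^3$, and it is then the Clifford torus, which is case (ii). As a consistency check, integrating $\Delta\log|\Phi|=4K$ over $M$ (accounting for zeros via Gauss--Bonnet) should also rule out genus $\geq2$ directly.

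The main obstacle is the elimination step. I need the correct normalization of the normal-curvature structure equation, and must verify that the resulting polynomial identity in $|\Phi|$ genuinely forces constancy rather than admitting a nonconstant solution. Such a nonconstant solution is exactly what happens in codimension $\geq3$ in the Li--Zhao examples. My first attempt will be to evaluate the identity at critical points of $|\Phi|$ and at zeros of $\Phi$, where $\lambda_2$ reaches its maximum $c/3$, and to derive a contradiction with the sign of $\Delta\log|\Phi|$ near those points.
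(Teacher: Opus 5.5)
Your framework is essentially the paper's. The equations $\Delta\log(a\pm b)=2K\mp K^\perp$ (equivalently $\Delta\log|\Phi|=4K$ plus one of them) are just $d\omega_{12}=-K\,d\mu$ and $d\omega_{34}=-K^\perp d\mu$ after Codazzi, and eliminating the Laplacian is Lemma~\ref{lem:grad-b}. The holomorphy of $\Phi$ is a nice substitute for the analyticity argument of Lemma~\ref{lem:analytic-circular}. But the step you leave open is the whole proof, and the outcome you predict for it is wrong. Do the elimination in the signed variable $b$, which is smooth wherever $\lambda_1>\lambda_2$. (Writing $K_N$ through $K_N^2\propto\lambda_1\lambda_2$ loses its sign, so it is not a smooth function of $|\Phi|$ across $\{\lambda_2=0\}$.) You get
\[
|\nabla b|^2=\frac{(c-4b^2)(c-6b^2)(c^2+4cb^2-2c-24b^4)}{6c(c-8b^2)} .
\]
For $c>2$ the numerator is positive for $0\le b^2<c/6$. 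So the identity never makes $|\Phi|$ locally constant. The coefficient $c-8b^2$ changes sign while the right-hand side does not, so the identity instead forces $\lambda_2=2b^2<c/4$ and $\nabla b\neq0$ on $\{\Phi\neq0\}$. The alternative ``$Q\equiv0$'' never arises; what decides everything is where $Q/P$ vanishes and where $P$ changes sign on the admissible interval.

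The decisive case is when $\Phi$ has zeros, and your plan does not handle it. Zeros are forced in genus $\geq2$ (there are $8(g-1)$ of them, not $4(g-1)$). They are exactly where $|\Phi|$ attains its minimum and $\lambda_2$ its maximum $c/3$, so your max/min argument applies only at the maximum of $|\Phi|$ and shows merely that $b$ vanishes somewhere. The sign of $\Delta\log|\Phi|=4K$ near a zero gives nothing: the singularity is an ordinary logarithmic one, and $K\to1-\frac c3$ can have either sign. Integrating $\Delta\log|\Phi|=4K$ only reproduces the Riemann--Roch zero count, so it cannot exclude genus $\geq2$ either. The missing idea is the forbidden band. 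Since $\lambda_2<c/4$ on $\{\Phi\neq0\}$ but $\lambda_2\to c/3$ at an isolated zero, $\Phi$ must be zero-free. Then the positive maximum of $\lambda_2$ is a critical point of $b$ with $b\neq0$, contradicting $\nabla b\neq0$; the only escape is $b\equiv0$, which forces $c=2$.

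Your attempt to treat all $c$ uniformly also fails for $c<2$. For $12/7\leq c<2$ the factor $c^2+4cb^2-2c-24b^4$ has roots inside $(0,c/6)$, so interior critical points are allowed and the first-order identity proves nothing. You would need the Laplacian equation at the extremum, or, as the paper does, Lu's first-gap theorem for $c\leq2$. Finally, in the case $\Phi\equiv0$ you do not exclude $K\leq0$. This and $K=\frac13$ both follow directly from $\omega_{34}=2\omega_{12}$, i.e.\ $K^\perp=2K$, combined with $K^\perp=1-K$.
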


\begin{corollary}\label{cor:lu-22}
There is no closed minimal immersion $M^2\to\Sph^4$ for which
$S+\lambda_2$ is a constant larger than $2$.  Hence Lu's second-gap
conjecture holds for $(n,m)=(2,2)$.  Moreover, for minimal surfaces, the
conjecture holds precisely in codimensions $m=1,2$ and fails in every
codimension $m\geq3$.
\end{corollary}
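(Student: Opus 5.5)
The corollary follows from Theorem~\ref{thm:main} combined with the results recalled in the introduction. No new geometric computation is needed; the work is in matching the dichotomy ``holds / fails'' to each codimension.

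First, codimension two. Let $M^2\to\Sph^4$ be a closed minimal immersion with $S+\lambda_2\equiv c$ constant. Theorem~\ref{thm:main} gives $c\in\{0,2\}$, so $c>2$ never occurs. The hypothesis of Conjecture~\ref{conj:lu-second-gap} for $(n,m)=(2,2)$ is therefore vacuous. Any $\varepsilon(2,2)>0$, for instance $\varepsilon(2,2)=1$, satisfies the conjecture, which proves the first two assertions.

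Next, codimension one. The case $m=1$ is the Peng--Terng theorem \cite{Peng-T1}, which covers every $n$ and in particular $n=2$. One can also check it directly. For $m=1$ we have $\lambda_2=0$, so the condition is just that $S$ is constant. By the Gauss equation $2K=2-S$, so the Gauss curvature $K$ is constant. A closed minimal surface of constant curvature in $\Sph^3$ is either totally geodesic ($S=0$) or the Clifford torus ($S=2$), so again no constant value larger than $2$ occurs. Thus the conjecture holds for $m=1,2$ with $n=2$.

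Finally, failure for $m\ge3$, which is the only step needing care. For odd $m\ge3$, Li--Zhao \cite{LiZhao2026} give closed embedded flat minimal tori in $\Sph^{2+m}$. Their constant values of $S+\lambda_2$ are dense in $(2,3)$, so they accumulate at $2$ from above. Hence for any proposed $\varepsilon(2,m)>0$ there is an example with $2<S+\lambda_2<2+\varepsilon$, and no admissible $\varepsilon(2,m)$ exists. For even $m\ge4$, compose the codimension-three tori with a totally geodesic inclusion $\Sph^5\subset\Sph^{2+m}$. The added normal directions have zero shape operators, so the fundamental matrix only gains zero rows and columns. Its nonzero eigenvalues are therefore unchanged, and since $\lambda_1\ge\lambda_2>0$ for these examples, $\lambda_2$ and $S$ are preserved. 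The same density argument then applies in every codimension $m\ge3$, which completes the ``precisely'' statement.
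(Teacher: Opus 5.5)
Your proposal is correct and follows the same route as the paper. The paper gives no separate proof of Corollary~\ref{cor:lu-22}: it reads the codimension-two claim off Theorem~\ref{thm:main}, which makes the hypothesis of Conjecture~\ref{conj:lu-second-gap} vacuous for $(n,m)=(2,2)$, and it cites Peng--Terng for $m=1$ and the Li--Zhao tori (with the totally geodesic inclusion for even $m$) for failure when $m\geq3$. Your direct check for $m=1$ is a harmless extra, and the preservation of $S$ and $\lambda_2$ under the inclusion does not actually need $\lambda_2>0$, since the added eigenvalues are zero and all eigenvalues of the Gram matrix are nonnegative.
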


\begin{remark}
The codimension-two conclusion does not extend to all even codimensions.  For
$q\geq2$, the linearly full Calabi minimal two-sphere \cite{Calabi1967,DoCarmoWallach1971} in
$\Sph^{2q}$ has
\[
        K=\frac{2}{q(q+1)},
        \qquad
        S=2(1-K),
        \qquad
        \lambda_1=\lambda_2=\frac S2.
\]
Consequently,
\[
        S+\lambda_2
        =3\left(1-\frac{2}{q(q+1)}\right).
\]
This value equals $2$ when $q=2$ and belongs to $(2,3)$ for every $q\geq3$.
\end{remark}

The paper is organized as follows.  In Section~2, we recall the basic notation
and formulas for minimal surfaces in $\Sph^4$.  In Section~3, we derive the
local differential identity needed in the proof.  In Section~4, we prove the classification theorem.

\section{Preliminaries}\label{pre}

Let $F:M^2\to\Sph^4$ be a minimal immersion.  All frame computations are
local.  When $M$ is nonorientable, they may equivalently be carried out on its
oriented double cover.  Choose an adapted orthonormal frame
\[
        e_1,e_2\in TM,
        \qquad
        e_3,e_4\in NM,
\]
with dual coframe $\omega_1,\omega_2$.  We use the conventions
\[
        *\omega_1=\omega_2,
        \qquad
        *\omega_2=-\omega_1,
        \qquad
        d\mu=\omega_1\wedge\omega_2.
\]
For a smooth function $u$, write
\[
        du=u_1\omega_1+u_2\omega_2,
        \qquad u_i=e_i(u).
\]
Then
\[
        |\nabla u|^2=u_1^2+u_2^2,
        \qquad
        *du=u_1\omega_2-u_2\omega_1.
\]
We use the convention
$\Delta u=\operatorname{div}(\nabla u)$; in particular,
$\Delta u\leq0$ at a local maximum. For every smooth function $f=f(b)$, a direct computation shows that
\begin{equation}\label{eq:star-divergence}
        d\bigl(f(b)*db\bigr)
        =\operatorname{div}\bigl(f(b)\nabla b\bigr)d\mu
        =\bigl(f(b)\Delta b+f'(b)|\nabla b|^2\bigr)d\mu.
\end{equation}

Let the range of the indices be as follows:
\begin{equation*}
	1\leq i,j,k,\ldots\leq 2,\qquad 3\leq\alpha,\beta,\gamma,\ldots\leq 4,\qquad 1\leq A,B,C,\ldots\leq4.
\end{equation*} 
We next fix the signs of the curvature forms used below. The connection forms
are defined by
\[
        de_A=\sum_B\omega_{AB}e_B,
        \qquad
        \omega_{AB}+\omega_{BA}=0.
\]
After restriction to $M$, Cartan's second structural equations in the unit
sphere read
\[
        d\omega_{AB}
        =\sum_C\omega_{AC}\wedge\omega_{CB}-\omega_A\wedge\omega_B,
        \qquad \omega_3=\omega_4=0.
\]
Writing
\[
        \omega_{i\alpha}
        =\sum_{j=1}^2h^\alpha_{ij}\omega_j,
        \qquad h^\alpha_{ij}=h^\alpha_{ji},
\]
we obtain
\begin{align*}
        d\omega_{12}
        &=-\omega_1\wedge\omega_2
          -\sum_{\alpha=3}^4\omega_{1\alpha}\wedge\omega_{2\alpha} \\
        &=-\left\{1+\sum_{\alpha=3}^4
          \bigl(h^\alpha_{11}h^\alpha_{22}-(h^\alpha_{12})^2\bigr)
          \right\}d\mu.
\end{align*}
The Gauss equation therefore gives
\begin{equation}\label{eq:gauss-curvature-form}
       K=1+\sum_{\alpha=3}^4
       \bigl(h^\alpha_{11}h^\alpha_{22}-(h^\alpha_{12})^2\bigr).
\end{equation}
Similarly,
\begin{align*}
        d\omega_{34}
        &=-\omega_{13}\wedge\omega_{14}
          -\omega_{23}\wedge\omega_{24} \\
        &=-\Bigl(
          h^3_{11}h^4_{12}-h^3_{12}h^4_{11}
          +h^3_{12}h^4_{22}-h^3_{22}h^4_{12}
          \Bigr)d\mu.
\end{align*}
Accordingly, we define the signed normal curvature by
\begin{equation}\label{eq:normal-curvature-form}
        d\omega_{34}=-K^\perp d\mu,
\end{equation}
which yields the Ricci equation:
\begin{equation}\label{Ricci}
        K^\perp
        =h^3_{11}h^4_{12}-h^3_{12}h^4_{11}
         +h^3_{12}h^4_{22}-h^3_{22}h^4_{12}.
\end{equation}
Denote by $A^{\alpha}$ the shape operator of $M^2$ with respect to a given normal orthonormal frame. For later use, we assume that $\lambda_{1}\geqslant \lambda_{2}$ are eigenvalues of the fundamental matrix $\mathcal{A}=(\langle A^{\alpha},A^{\beta}\rangle)_{2\times 2}$.
Consider the open set
\[
        U:=\{p\in M:\lambda_1(p)>\lambda_2(p)>0\}.
\]
On a sufficiently small neighborhood of every point $p$ in $U$, we may choose tangent and normal
frames such that
\begin{equation}\label{eq:canonical-shapes}
        A^3=\begin{pmatrix}a&0\\0&-a\end{pmatrix},
        \qquad
        A^4=\begin{pmatrix}0&b\\b&0\end{pmatrix},
        \qquad a>b>0.
\end{equation}
Then
\[
        \lambda_1=2a^2,
        \qquad
        \lambda_2=2b^2,
        \qquad
        S=2a^2+2b^2.
\]
Thus $a$ and $b$ are globally well‑defined on $U$, and because the fundamental matrix $\mathcal{A}$ has two simple eigenvalues, which are smooth, so $a$ and $b$ are smooth as well.  The Gauss and Ricci equations \eqref{eq:gauss-curvature-form} and \eqref{Ricci} give
\begin{equation}\label{eq:K-and-Kperp}
        K=1-a^2-b^2,
        \qquad
        K^\perp=2ab.
\end{equation}
Here the tangent and normal orientations have been chosen, on each local
neighborhood, so that the second identity has the displayed sign.  Only this
local convention is used in the sequel.

Assume from now on that
\[
        S+\lambda_2\equiv c,
        \qquad c>2.
\]
On $U$, this is equivalent to
\begin{equation}\label{eq:a-b-constraint}
        2a^2+4b^2=c.%,
%        \qquad\text{or}\qquad
    %    a^2+2b^2=\frac c2.
\end{equation}
Set
\[
        D:=a^2-b^2=\frac c2-3b^2.
\]
Since $a>b>0$,
\begin{equation}\label{eq:b-range}
        0<b^2<\frac c6.
\end{equation}

\section{The local differential identity}

The following computation is the core of the proof.

\begin{lemma}\label{lem:connections}
On $U$ one has
\begin{equation}\label{eq:connection-formulas}
        \omega_{12}=\frac{3b}{2D}*db,
        \qquad
        \omega_{34}=\frac{c}{2aD}*db.
\end{equation}
\end{lemma}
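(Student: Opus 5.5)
The plan is to read off the connection forms directly from the Codazzi equations in the adapted frame \eqref{eq:canonical-shapes}, using the constraint \eqref{eq:a-b-constraint} to eliminate $da$.

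First, write
\[
\omega_{12}=p_1\omega_1+p_2\omega_2,\qquad \omega_{34}=q_1\omega_1+q_2\omega_2 .
\]
The covariant derivatives of the second fundamental form are defined by
\[
\sum_k h^\alpha_{ijk}\omega_k=dh^\alpha_{ij}+\sum_k h^\alpha_{kj}\omega_{ki}+\sum_k h^\alpha_{ik}\omega_{kj}+\sum_\beta h^\beta_{ij}\omega_{\beta\alpha}.
\]
Substituting $h^3_{11}=-h^3_{22}=a$, $h^4_{12}=b$ and all other entries zero gives
\[
\sum_k h^3_{11k}\omega_k=da,\qquad \sum_k h^3_{12k}\omega_k=2a\,\omega_{12}-b\,\omega_{34},
\]
\[
\sum_k h^4_{11k}\omega_k=-2b\,\omega_{12}+a\,\omega_{34},\qquad \sum_k h^4_{12k}\omega_k=db .
\]
Since each $A^\alpha$ is traceless, we also have $h^\alpha_{22k}=-h^\alpha_{11k}$.

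Next, apply the Codazzi symmetry $h^\alpha_{ijk}=h^\alpha_{ikj}$. This gives four scalar equations:
\[
a_2=2ap_1-bq_1,\qquad -a_1=2ap_2-bq_2,
\]
\[
b_1=-2bp_2+aq_2,\qquad b_2=2bp_1-aq_1 .
\]
Differentiating \eqref{eq:a-b-constraint} gives $a\,da=-2b\,db$, so $a_i=-2b\,b_i/a$.

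The system then splits into two $2\times2$ linear systems. One is in $(p_1,q_1)$, with right-hand sides involving $b_2$. The other is in $(p_2,q_2)$, with right-hand sides involving $b_1$. Both have coefficient determinant $\pm 2(a^2-b^2)=\pm2D$, which is nonzero on $U$ because $a>b>0$.

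Solving by Cramer's rule should give
\[
p_1=-\frac{3b\,b_2}{2D},\qquad q_1=-\frac{(2a^2+4b^2)\,b_2}{2aD}=-\frac{c\,b_2}{2aD},
\]
and symmetrically $p_2=\frac{3b\,b_1}{2D}$ and $q_2=\frac{c\,b_1}{2aD}$. Here \eqref{eq:a-b-constraint} is used exactly to produce the constant $c$. Since $*db=b_1\omega_2-b_2\omega_1$, these formulas are precisely \eqref{eq:connection-formulas}.

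No step is conceptually hard. The main risk is sign bookkeeping. The formula for $h^\alpha_{ijk}$ must use connection-form conventions consistent with $de_A=\sum\omega_{AB}e_B$, and the tangent and normal orientations are fixed locally so that $K^\perp=2ab$ rather than $-2ab$.

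As a consistency check, I would confirm that with these conventions the derived $\omega_{34}$ reproduces the sign in \eqref{eq:normal-curvature-form}. I would also verify the $(p_2,q_2)$ system explicitly rather than trusting the symmetry, because the trace-free relation introduces an extra minus sign there.
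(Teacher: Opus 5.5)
Your proposal is correct and follows essentially the same route as the paper. Your four Codazzi identities are exactly the paper's system \eqref{eq:codazzi-a-b}, with your $p_1,p_2,q_1,q_2$ playing the roles of $p,q,r,s$. Eliminating $a_i$ via $a_i=-2b\,b_i/a$ and solving the two $2\times2$ systems with determinant $\pm2D$ gives precisely the stated coefficients. The proposed consistency check against the sign of $K^\perp$ is unnecessary here: the lemma uses only the Codazzi equations, and with $a,b>0$ in \eqref{eq:canonical-shapes} one gets $K^\perp=2ab$ automatically.
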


\begin{proof}
Write
\[
        \omega_{12}=p\,\omega_1+q\,\omega_2,
        \qquad
        \omega_{34}=r\,\omega_1+s\,\omega_2.
\]
With the conventions in Section \ref{pre}, the covariant derivatives of the second
fundamental form are defined by
\begin{equation}\label{eq:h-covder}
        \sum_k h^\alpha_{ijk}\omega_k
        =dh^\alpha_{ij}
         +\sum_l h^\alpha_{lj}\omega_{li}
         +\sum_l h^\alpha_{il}\omega_{lj}
         +\sum_\beta h^\beta_{ij}\omega_{\beta\alpha}.
\end{equation}
The Codazzi equations are $h^\alpha_{ijk}=h^\alpha_{ikj}$.
For the third normal direction, \eqref{eq:canonical-shapes} and
\eqref{eq:h-covder} give
\begin{align*}
        \sum_k h^3_{11k}\omega_k&=da,\\
        \sum_k h^3_{12k}\omega_k&=2a\omega_{12}-b\omega_{34},\\
        \sum_k h^3_{22k}\omega_k&=-da.
\end{align*}
Hence the Codazzi identities imply
\[
        a_1=-2aq+bs,
        \qquad
        a_2=2ap-br.
\]
For the fourth normal direction, we similarly obtain
\begin{align*}
        \sum_k h^4_{11k}\omega_k&=-2b\omega_{12}+a\omega_{34},\\
        \sum_k h^4_{12k}\omega_k&=db,\\
        \sum_k h^4_{22k}\omega_k&=2b\omega_{12}-a\omega_{34},
\end{align*}
and therefore
\[
        b_1=-2bq+as,
        \qquad
        b_2=2bp-ar.
\]
Altogether,
\begin{equation}\label{eq:codazzi-a-b}
	\begin{aligned}
		a_1&=-2aq+bs,\qquad a_2=2ap-br,\\
		b_1&=-2bq+as,\qquad b_2=2bp-ar.
	\end{aligned}
\end{equation}
Differentiating \eqref{eq:a-b-constraint} yields
\begin{equation}\label{eq:ai-bi}
        a_i=-\frac{2b}{a}b_i,
        \qquad i=1,2.
\end{equation}
Solving \eqref{eq:codazzi-a-b} with the aid of \eqref{eq:ai-bi}, we find
\[
        q=\frac{3b}{2D}b_1,
        \qquad
        s=\frac{c}{2aD}b_1,
\]
and
\[
        p=-\frac{3b}{2D}b_2,
        \qquad
        r=-\frac{c}{2aD}b_2.
\]
Since $*db=b_1\omega_2-b_2\omega_1$, the formulas in
\eqref{eq:connection-formulas} follow.
\end{proof}

\begin{lemma}\label{lem:grad-b}
On $U$ one has
\begin{equation}\label{eq:grad-b-formula}
        |\nabla b|^2
        =\frac{(c-4b^2)(c-6b^2)
        \bigl(c^2+4cb^2-2c-24b^4\bigr)}
        {6c(c-8b^2)}.
\end{equation}
In particular,
\[
        b^2<\frac c8
        \qquad\text{and}\qquad
        |\nabla b|>0
        \quad\text{on }U.
\]
\end{lemma}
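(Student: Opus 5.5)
The plan is to combine Lemma~\ref{lem:connections} with the two structure equations \eqref{eq:gauss-curvature-form} and \eqref{eq:normal-curvature-form}. This gives two second-order equations for the single function $b$, and eliminating $\Delta b$ between them leaves an algebraic identity for $|\nabla b|^2$.

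First I take the exterior derivative of both formulas in \eqref{eq:connection-formulas}, using \eqref{eq:star-divergence} with $f_1(b)=\frac{3b}{2D}$ and $f_2(b)=\frac{c}{2aD}$. Here $D=\frac c2-3b^2$, and $a=a(b)=\sqrt{(c-4b^2)/2}$ by \eqref{eq:a-b-constraint}. The curvature conventions give $d\omega_{12}=-K\,d\mu$ and $d\omega_{34}=-K^\perp d\mu$, and \eqref{eq:K-and-Kperp} expresses $K=1-a^2-b^2$ and $K^\perp=2ab$ in terms of $b$ alone. So on $U$ we get the system
\begin{align*}
f_1(b)\Delta b+f_1'(b)|\nabla b|^2&=-(1-a^2-b^2),\\
f_2(b)\Delta b+f_2'(b)|\nabla b|^2&=-2ab.
\end{align*}
This is linear in the unknowns $\Delta b$ and $|\nabla b|^2$. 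Multiplying the first equation by $f_2$, the second by $f_1$, and subtracting eliminates $\Delta b$:
\[
(f_1'f_2-f_1f_2')\,|\nabla b|^2=-f_2(1-a^2-b^2)+2abf_1 .
\]

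The main labour, and the main risk, is computing the Wronskian-type coefficient $f_1'f_2-f_1f_2'$ and simplifying the right-hand side into the factored form \eqref{eq:grad-b-formula}. For this I will compute $a'=-2b/a$ and $D'=-6b$, clear the common factors $a$ and $D$, and use $a^2=(c-4b^2)/2$ to rewrite everything as rational functions of $b^2$. I expect the coefficient to be a nonzero multiple of $(c-8b^2)/(aD)^2$ (up to such factors), which is the source of the denominator. The numerator should then factor as $(c-4b^2)(c-6b^2)(c^2+4cb^2-2c-24b^4)$; note that $c-6b^2=2D$ and $c-4b^2=2a^2$. I will check the final algebra by specializing $b\to0$, where the formula gives $|\nabla b|^2=\frac{c(c-2)}{6}$.

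For the consequences, on $U$ we have $0<b^2<c/6$ by \eqref{eq:b-range}, so $c-4b^2>0$ and $c-6b^2>0$. The quartic factor equals $c(c-2)+4b^2(c-6b^2)$, which is strictly positive because $c>2$ and $c-6b^2>0$. The identity requires $c-8b^2\neq0$ at every point of $U$, since otherwise its left side would be finite while its right side blows up; equivalently, the vanishing of the coefficient would force the nonzero right-hand side to vanish. Then $|\nabla b|^2\ge0$ forces $c-8b^2>0$, that is, $b^2<c/8$. With all factors positive, we get $|\nabla b|>0$ on $U$.
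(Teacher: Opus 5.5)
Your proposal is correct and is essentially the paper's proof. Your $f_1,f_2$ are the paper's $E,G$, eliminating $\Delta b$ gives the same identity $(f_1'f_2-f_1f_2')|\nabla b|^2=-f_2K+2abf_1$, and your rewriting $c^2+4cb^2-2c-24b^4=c(c-2)+4b^2(c-6b^2)$ is a quicker substitute for the paper's concavity argument. One point to tidy is the exclusion of $c=8b^2$: the ``right side blows up'' phrasing presupposes the divided formula, so rely on your ``equivalently'' clause instead. At such a point $f_1'f_2-f_1f_2'=0$ while $-f_2K+2abf_1=(c^2+4cb^2-2c-24b^4)/(4aD)>0$, which is exactly how the paper argues.
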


\begin{proof}
Set
\begin{equation}   \label{EG}
	  E(b):=\frac{3b}{c-6b^2},
        \qquad
        G(b):=\frac{c}{a(c-6b^2)}.
\end{equation}
By Lemma~\ref{lem:connections},
\[
        \omega_{12}=E(b)*db,
        \qquad
        \omega_{34}=G(b)*db.
\]
Equations \eqref{eq:star-divergence}, \eqref{eq:gauss-curvature-form},
\eqref{eq:normal-curvature-form}, and \eqref{Ricci} give
\begin{equation}\label{eq:divE-divG}
        E\Delta b+E'|\nabla b|^2=-K,
        \qquad
        G\Delta b+G'|\nabla b|^2=-K^\perp.
\end{equation}
By \eqref{eq:a-b-constraint} and \eqref{eq:K-and-Kperp},
\begin{equation}\label{eq:K-expressions}
        K=1-\frac c2+b^2,
        \qquad
        K^\perp=2ab,
        \qquad
        a^2=\frac{c-4b^2}{2}.
\end{equation}
Eliminating $\Delta b$ from \eqref{eq:divE-divG} yields
\begin{equation}\label{eq:eliminate}
        (GE'-EG')|\nabla b|^2=-GK+EK^\perp=-GK+2Eab.
\end{equation}
A direct differentiation using \eqref{EG} and \eqref{eq:K-expressions} gives
\begin{equation}\label{eq:GE-EG}
        GE'-EG'
        =\frac{3\sqrt2\,c(c-8b^2)}
        {(c-4b^2)^{\frac{3}{2}}(c-6b^2)^2},
\end{equation}
whereas
\begin{equation}\label{eq:rhs-simplified}
        -GK+2Eab
        =\frac{\sqrt2\bigl(c^2+4cb^2-2c-24b^4\bigr)}
        {2\sqrt{c-4b^2}(c-6b^2)}.
\end{equation}
Before dividing by \eqref{eq:GE-EG}, we exclude its only possible zero on
$U$.  If $b^2=\frac{c}{8}$, then the right-hand side of \eqref{eq:eliminate} equals
\[
        \frac{9c-16}{2\sqrt c}>0,
\]
whereas $GE'-EG'=0$, a contradiction.  Thus $c-8b^2\neq0$ on $U$, and
substitution of \eqref{eq:GE-EG} and \eqref{eq:rhs-simplified} into
\eqref{eq:eliminate} proves \eqref{eq:grad-b-formula}.

It remains to determine the sign.  Put $x=b^2$.  By \eqref{eq:b-range},
$0<x<\frac{c}{6}$, so $c-4x>0$ and $c-6x>0$.  Moreover,
\[
        Q(x):=c^2+4cx-2c-24x^2
             =-24x^2+4cx+c(c-2).
\]
The polynomial $Q$ is concave and is positive at both endpoints of
$[0,\frac{c}{6}]$:
\[
        Q(0)=Q(\frac{c}{6})=c(c-2)>0.
\]
Hence $Q(x)>0$ for $0<x<\frac{c}{6}$.  Since the left-hand side of
\eqref{eq:grad-b-formula} is nonnegative and every factor in its numerator is
strictly positive, one must have $c-8b^2>0$.  Formula
\eqref{eq:grad-b-formula} then gives $|\nabla b|>0$.
\end{proof}

\section{Proof of the main theorem}
We first record the analytic continuation in the following lemma.

\begin{lemma}\label{lem:analytic-circular}
Let $M^2\to\Sph^4$ be a minimal immersion.  If
\[
        W:=\{p\in M:\lambda_1(p)=\lambda_2(p)\}
\]
has nonempty interior, then $\lambda_1\equiv\lambda_2$ on $M$.
\end{lemma}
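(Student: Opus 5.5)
Our plan is to express the condition $\lambda_1=\lambda_2$ as the vanishing of a holomorphic object and then use unique continuation. For a minimal surface in $\Sph^4$, the condition $\lambda_1=\lambda_2$ at a point says that the fundamental matrix $\mathcal A$ is a multiple of the identity, i.e.\ $|A^3|^2=|A^4|^2$ and $\langle A^3,A^4\rangle=0$. In terms of the complex structure this means the curvature ellipse is a circle, possibly degenerate to a point. We will use a local isothermal coordinate $z$ and the complex normal-valued Hopf-type quantity
\[
        \Phi:=\langle \mathrm{II}(\partial_z,\partial_z),\mathrm{II}(\partial_z,\partial_z)\rangle\,dz^4,
\]
where $\mathrm{II}(\partial_z,\partial_z)=h_{zz}$ is $\mathbb C$-bilinearly extended to the complexified normal bundle. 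Writing $h_{zz}=\tfrac14\sum_\alpha(h^\alpha_{11}-h^\alpha_{22}-2ih^\alpha_{12})e_\alpha$ in an orthonormal frame with $\lambda=e^{2\omega}$ conformal factor, a short computation gives
\[
        \langle h_{zz},h_{zz}\rangle\propto \bigl(|A^3|^2-|A^4|^2\bigr)-2i\langle A^3,A^4\rangle,
\]
up to a positive factor and the choice of frame. So $\Phi(p)=0$ if and only if $\lambda_1(p)=\lambda_2(p)$, and $W$ is exactly the zero set of $\Phi$.

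First we record that $\Phi$ is a holomorphic quartic differential. This is the classical fact from the theory of minimal surfaces in $\Sph^n$ (Chern, Calabi): by Codazzi and minimality, $\nabla^\perp_{\bar z}h_{zz}=0$ (since $h_{z\bar z}=0$), and hence $\partial_{\bar z}\langle h_{zz},h_{zz}\rangle=2\langle\nabla^\perp_{\bar z}h_{zz},h_{zz}\rangle=0$. In the present frame conventions this can be checked directly from the Codazzi equations $h^\alpha_{ijk}=h^\alpha_{ikj}$ in \eqref{eq:h-covder} together with $h^\alpha_{11}+h^\alpha_{22}=0$. The computation is local, so nonorientability is handled by passing to the oriented double cover, as in Section~\ref{pre}.

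Next we conclude by the identity theorem. In each coordinate chart the coefficient $f(z)$ of $\Phi$ is a holomorphic function, and $W\cap(\text{chart})=\{f=0\}$. If $W$ has an interior point, then $f$ vanishes on an open set of some chart, so $f\equiv0$ on that (connected) chart. The set of points having a neighborhood on which $\Phi\equiv0$ is open. It is also closed: at a limit point, the holomorphic coefficient in a connected chart around it vanishes on an open subset, hence identically. Since $M$ (or its double cover) is connected, $\Phi\equiv0$ everywhere, i.e.\ $\lambda_1\equiv\lambda_2$ on $M$.

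The main obstacle is verifying that $\Phi$ is holomorphic with the paper's sign conventions. This amounts to checking $\nabla^\perp_{\bar z}h_{zz}=0$, which follows from Codazzi symmetry plus tracelessness. We will present it concisely by expanding $h_{zz}$ in the frame and using \eqref{eq:h-covder}. Everything else is standard unique continuation for holomorphic functions.
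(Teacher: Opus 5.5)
Your proof is correct, but it takes a different route from the paper. The paper invokes Morrey's analytic regularity theorem for the elliptic system $F_{xx}+F_{yy}+(|F_x|^2+|F_y|^2)F=0$ to get that $F$ is real analytic in isothermal coordinates. It then applies the real-analytic identity theorem to the frame-independent discriminant $(\operatorname{tr}\mathcal A)^2-4\det\mathcal A=(\lambda_1-\lambda_2)^2$. You instead use the classical holomorphic quartic differential $\langle h_{zz},h_{zz}\rangle\,dz^4$, whose holomorphicity needs only Codazzi (valid without a curvature term in a space form) and minimality. So you avoid elliptic regularity theory entirely, and you get a stronger conclusion: if $\lambda_1\not\equiv\lambda_2$, then $W$ is discrete rather than merely of empty interior. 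On a closed orientable surface, $W$ then consists of $8g-8$ points counted with multiplicity, so on a sphere $\lambda_1\equiv\lambda_2$ is forced. The paper's approach is softer and more portable. It works verbatim for any frame-invariant analytic expression in the second fundamental form, and the real analyticity of $F$ is reused later, in Lemma~\ref{lem:rank-one}. One correction to your write-up: the displayed formula for $\langle h_{zz},h_{zz}\rangle$ is not right in an arbitrary frame. With $X=(h^3_{11},h^4_{11})$, $Y=(h^3_{12},h^4_{12})$ and metric $e^{2\varphi}|dz|^2$, one has $\langle h_{zz},h_{zz}\rangle=\tfrac14 e^{4\varphi}\bigl(|X|^2-|Y|^2-2i\langle X,Y\rangle\bigr)$. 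This involves the Gram matrix of $X,Y$ rather than $\mathcal A$ itself. Since $\mathcal A=2(XX^{T}+YY^{T})$ has eigenvalues twice those of that Gram matrix, you get $|\langle h_{zz},h_{zz}\rangle|=\tfrac18 e^{4\varphi}(\lambda_1-\lambda_2)$. This is exactly the zero-set identification your argument needs.
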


\begin{proof}
	Fix local isothermal coordinates $z=x+iy$ for the induced metric, so that
	\[
	g=e^{2u}(dx^2+dy^2).
	\]
	Regarded as an $\mathbb R^5$-valued map, a minimal immersion into the unit
	sphere satisfies
	\[
	\Delta_gF+2F=0.
	\]
	Since $|F_x|^2=|F_y|^2=e^{2u}$, this equation becomes
	\[
	F_{xx}+F_{yy}
	+\bigl(|F_x|^2+|F_y|^2\bigr)F=0.
	\]
	This is an analytic strongly elliptic system. By Morrey's analytic
	regularity theorem for analytic nonlinear elliptic systems
	\cite{Morrey1958}, the immersion $F$ is real analytic in these isothermal
	(and hence harmonic) coordinates. Consequently, the induced metric is real
	analytic.
	
	After shrinking the coordinate neighborhood, choose real-analytic
	orthonormal tangent and normal frames. In these frames,
	\[
	h^\alpha_{ij}
	=\langle D_{e_i}e_j,e_\alpha\rangle,
	\qquad
	\mathcal A_{\alpha\beta}
	=\langle A^\alpha,A^\beta\rangle
	=\sum_{i,j}h^\alpha_{ij}h^\beta_{ij},
	\]
	are real analytic. Although the ordered eigenvalues of $\mathcal A$ need
	not be real analytic at a multiple eigenvalue, the frame-independent
	discriminant
	\[
	\Phi:=(\lambda_1-\lambda_2)^2
	=(\operatorname{tr}\mathcal A)^2-4\det\mathcal A
	\]
	is real analytic. If $W$ has nonempty interior, then $\Phi$ vanishes on a
	nonempty open set. The identity theorem for real-analytic functions,
	applied on the connected surface $M$, gives $\Phi\equiv0$. Hence
	$\lambda_1\equiv\lambda_2$.
\end{proof}

The next two lemmas eliminate the two possible degenerate configurations.

\begin{lemma}\label{lem:rank-one}
Let $F:M^2\to\Sph^4$ be a minimal immersion such that
$\lambda_2\equiv0$ and $S\equiv c>0$.  Then $c=2$.  Moreover, the image of
$M$ is contained in a totally geodesic $\Sph^3\subset\Sph^4$.
\end{lemma}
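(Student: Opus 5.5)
The plan is to exploit that $\lambda_2\equiv0$ and $S\equiv c>0$ force the fundamental matrix $\mathcal A$ to have rank exactly one everywhere. Then one normal direction carries no second fundamental form, and that direction turns out to be a constant vector in $\R^5$.

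\emph{Adapted frame.} Since $\lambda_1=S-\lambda_2=c>0$ is a simple eigenvalue of $\mathcal A$, distinct from $\lambda_2=0$, the corresponding eigenline in $NM$ is a smooth line field. Near any point choose a smooth unit normal $e_3$ spanning it and let $e_4$ span the kernel of $\mathcal A$. Then $\langle A^4,A^4\rangle=\lambda_2=0$, so $A^4\equiv0$. Since $A^3$ is traceless and symmetric, we can rotate the tangent frame so that $A^3=\mathrm{diag}(a,-a)$ with $2a^2=c$; thus $a$ is a nonzero constant.

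\emph{Normal direction is parallel.} Apply \eqref{eq:h-covder} with $\alpha=4$. Because $h^4_{ij}\equiv0$, all $h^4_{ijk}$ vanish, and the identity reduces to $\sum_\beta h^\beta_{ij}\omega_{\beta4}=h^3_{ij}\omega_{34}=0$ for all $i,j$. Since $A^3\neq0$, this gives $\omega_{34}=0$. Together with $\omega_{4i}=\sum_j h^4_{ij}\omega_j=0$, we get, viewing everything in $\R^5$,
\[
de_4=\omega_{41}e_1+\omega_{42}e_2+\omega_{43}e_3-\omega_4F=0,
\]
using $\omega_4=0$. Hence $e_4$ is locally constant. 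The line $\R e_4$ is globally defined as $\ker\mathcal A$, so by connectedness of $M$ it is a fixed line $\R v$ in $\R^5$; for nonorientable $M$ one may argue on the double cover or directly with the line. Moreover $d\langle F,v\rangle=\langle dF,v\rangle=0$, since $dF$ is tangent and $v$ is normal. So $\langle F,v\rangle$ is constant, and it equals $0$ because $v\perp F$ pointwise. Thus $F(M)\subset v^\perp\cap\Sph^4$, a totally geodesic $\Sph^3$.

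\emph{Determining $c$.} Now use the Codazzi computation from the proof of Lemma~\ref{lem:connections} with $b=0$, $r=s=0$ (since $\omega_{34}=0$) and $a$ constant. The relations \eqref{eq:codazzi-a-b} become $0=a_1=-2aq$ and $0=a_2=2ap$, so $p=q=0$ and $\omega_{12}\equiv0$. Hence $K=0$ from the structure equation for $d\omega_{12}$. The Gauss equation \eqref{eq:gauss-curvature-form} then gives $0=K=1-a^2$, so $a^2=1$ and $c=2a^2=2$.

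The only delicate point is the global step: the frame is only local and may not be orientable, so one must check that the constant vector field is well defined as a line. This is handled, as above, by working with the line $\ker\mathcal A$ rather than with a chosen unit vector.
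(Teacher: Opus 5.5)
Your outline matches the paper's: an adapted frame with $A^4\equiv0$ and $A^3=\mathrm{diag}(a,-a)$, then $\omega_{34}=0$, then $e_4$ constant in $\R^5$, and finally the $e_3$-Codazzi relations giving $\omega_{12}=0$, $K=0$, $c=2$. However, your step for $\omega_{34}=0$ is circular.

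You claim that $h^4_{ij}\equiv0$ forces all $h^4_{ijk}$ to vanish. It does not. The quantity $h^4_{ijk}$ is the $e_4$-component of $\nabla h$, and \eqref{eq:h-covder} contains the normal-connection term. With $h^4_{ij}\equiv0$ it reads exactly
\[
h^4_{ijk}=h^3_{ij}\,\omega_{34}(e_k).
\]
So asserting that the left side vanishes is the same as asserting $\omega_{34}=0$, which is what you are trying to prove. As a general principle the claim is false. Take the cylinder $\gamma\times\R\subset\R^3\times\R$ over a space curve with torsion $\tau\neq0$. Its second fundamental form takes values in the principal-normal line, so $h^4\equiv0$ for $e_4$ the binormal. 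Yet $h^4_{111}=\kappa\tau\neq0$, because the normal connection form between principal normal and binormal is $\tau\,ds$.

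The missing ingredient is the Codazzi symmetry $h^4_{ijk}=h^4_{ikj}$, which is what the paper uses. Taking $(i,j,k)=(1,1,2)$ gives $a\,\omega_{34}(e_2)=h^3_{12}\,\omega_{34}(e_1)=0$. Taking $(i,j,k)=(2,2,1)$ gives $-a\,\omega_{34}(e_1)=h^3_{21}\,\omega_{34}(e_2)=0$. Since $a\neq0$, this yields $\omega_{34}=0$. Equivalently, the $\alpha=4$ line of \eqref{eq:codazzi-a-b} with $b\equiv0$ reads $0=as$ and $0=-ar$. What matters is that the traceless nonzero $A^3$ is nonsingular, and this is exactly what fails for the cylinder.

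Your derivation of $c=2$ is unaffected. With $A^4\equiv0$, the $e_3$-Codazzi relations are $a_1=-2aq$ and $a_2=2ap$ regardless of $\omega_{34}$. So the gap concerns only the claim that $F(M)$ lies in a totally geodesic $\Sph^3$.

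Once $\omega_{34}=0$ is properly established, your global step is correct. The line $\ker\mathcal A$ is globally defined and locally constant as a line in $\R^5$, hence constant on the connected $M$. This is a more elementary substitute for the paper's appeal to real analyticity of $F$ to extend $\langle F,v\rangle=0$ from $V$ to all of $M$.
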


\begin{proof}
The fundamental matrix has rank one and the second fundamental form is nowhere
zero.  On a simply connected coordinate neighborhood $V$, choose the normal
frame so that $A^4\equiv0$ and then rotate the tangent frame to obtain
\[
        A^3=\begin{pmatrix}\kappa&0\\0&-\kappa\end{pmatrix},
        \qquad
        A^4=0,
        \qquad
        \kappa=\sqrt{\frac c2}>0.
\]
Write $\omega_{34}=r\omega_1+s\omega_2$.  Applying \eqref{eq:h-covder} to
the fourth normal direction gives
\[
        \sum_k h^4_{ijk}\,\omega_k
        =
        d h^4_{ij}
        +\sum_l h^4_{lj}\,\omega_{li}
        +\sum_l h^4_{il}\,\omega_{lj}
        +\sum_\beta h^\beta_{ij}\,\omega_{\beta 4}
        =h^3_{ij}\omega_{34}.
\]
Taking successively $(i,j)=(1,1),(1,2),(2,2)$ and comparing the
coefficients of $\omega_1$ and $\omega_2$, we obtain
\[
\begin{array}{lll}
 h^4_{111}=\kappa r, & h^4_{112}=\kappa s, &
 h^4_{121}=0,\quad h^4_{122}=0,\\
 h^4_{221}=-\kappa r, & h^4_{222}=-\kappa s. &
\end{array}
\]
Because $h^4_{ij}=h^4_{ji}$ and the Codazzi equations give
$h^4_{ijk}=h^4_{ikj}$, the tensor $h^4_{ijk}$ is symmetric in all three
tangent indices.  Consequently,
\[
        \kappa s=h^4_{112}=h^4_{121}=0,
        \qquad
        -\kappa r=h^4_{221}=h^4_{122}=0.
\]
Since $\kappa>0$, it follows that $r=s=0$, and hence
$\omega_{34}=0$ on $V$.

To spell out the corresponding argument for $A^3$, write
$\omega_{12}=p\omega_1+q\omega_2$.  Since $\kappa$ is constant and
$\omega_{34}=0$, formula \eqref{eq:h-covder} gives
\[
        \sum_k h^3_{11k}\omega_k=0,
        \qquad
        \sum_k h^3_{12k}\omega_k=2\kappa\omega_{12},
        \qquad
        \sum_k h^3_{22k}\omega_k=0.
\]
Thus $h^3_{112}=0$, $h^3_{121}=2\kappa p$,
$h^3_{221}=0$, and $h^3_{122}=2\kappa q$.  The Codazzi identities
$h^3_{112}=h^3_{121}$ and $h^3_{221}=h^3_{122}$ yield
$2\kappa p=2\kappa q=0$.  Hence $p=q=0$, so
$\omega_{12}=0$ on $V$, and \eqref{eq:gauss-curvature-form} yields
$K=0$.  The Gauss equation $K=1-\frac{S}{2}$ now gives $c=2$.

For completeness, $A^4=0$ and $\omega_{34}=0$ also show that, as a vector
field in the ambient space $\R^5$,
\[
        D_Xe_4=-A^4X+\nabla_X^\perp e_4=0
        \qquad (X\in TM).
\]
Thus $e_4$ is a constant vector $v$ on $V$, and
$\langle F,v\rangle=0$ there.  Since $F$ is real analytic and $M$ is
connected, $\langle F,v\rangle$ vanishes identically.  Therefore
$F(M)\subset\Sph^4\cap v^\perp\cong\Sph^3$.
\end{proof}

\begin{lemma}\label{lem:superminimal}
Let $F:M^2\to\Sph^4$ be a minimal immersion such that
$\lambda_1\equiv\lambda_2$ and
$S+\lambda_2\equiv c>0$.  Then $c=2$.  If $M$ is closed, the image of $F$
is the Veronese surface in $\Sph^4$.
\end{lemma}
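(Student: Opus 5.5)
The hypothesis $\lambda_1\equiv\lambda_2$ says that the two shape operators have equal norm and are orthogonal: $|A^3|=|A^4|$ and $\langle A^3,A^4\rangle=0$. For a surface this means the ellipse of curvature is a circle at every point, i.e. the immersion is superminimal, with $\lambda_1=\lambda_2=S/2$. The constraint $S+\lambda_2\equiv c$ then gives $S\equiv 2c/3$, so $S$ is a positive constant. I will first treat the local structure and show $K$ is a constant, then pin down the value, and finally invoke a rigidity result.

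\textbf{Step 1: local frames and $\omega_{12}=2\omega_{34}$.} Since $S>0$ constant, the second fundamental form never vanishes. Locally I will choose frames with
\[
A^3=\begin{pmatrix}a&0\\0&-a\end{pmatrix},\qquad A^4=\begin{pmatrix}0&a\\a&0\end{pmatrix},
\]
where $a^2=c/6$ is constant. This is the case $b=a$ of \eqref{eq:canonical-shapes}, obtained by rotating the tangent frame to diagonalize $A^3$. Then \eqref{eq:K-and-Kperp} gives
\[
K=1-2a^2=1-\frac c3,\qquad K^\perp=2a^2=\frac c3,
\]
both constant. The same Codazzi computation as in Lemma~\ref{lem:connections}, now with $a_i=b_i=0$ and $b=a$, reduces \eqref{eq:codazzi-a-b} to
\[
0=-2aq+as,\qquad 0=2ap-ar.
\]
Hence $s=2q$ and $r=2p$, that is, $\omega_{34}=2\omega_{12}$. (The sign depends on the orientation convention; I will check it against \eqref{eq:K-and-Kperp}.)

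\textbf{Step 2: the value of $c$.} Taking exterior derivatives of $\omega_{34}=2\omega_{12}$ and using \eqref{eq:gauss-curvature-form}, \eqref{eq:normal-curvature-form}, together with $d\omega_{12}=-K\,d\mu$, yields $K^\perp=2K$. Substituting the constants from Step~1 gives
\[
\frac c3=2-\frac{2c}{3},
\]
so $c=2$, $K=1/3$ and $S=4/3$. The main point to verify here is the sign convention in $\omega_{34}=\pm2\omega_{12}$. The wrong sign would give $K^\perp=-2K$, i.e. $c/3=-2+2c/3$, so $c=6$ and $K=-1$. I would rule this out by rechecking the computation, or by observing that the orientation in \eqref{eq:K-and-Kperp} was fixed precisely so that $K^\perp=2ab>0$. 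If the sign ambiguity persists, I will instead use that a closed surface with $K=-1$ is hyperbolic and apply the global argument below. Note also that $d\omega_{12}=-K\,d\mu$ is consistent with $K$ constant, so no contradiction arises from the local computation alone; the constraint comes solely from the coupling between the two connection forms.

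\textbf{Step 3: closed case, Veronese surface.} With $K\equiv1/3$ constant and $M$ closed, the metric has positive constant curvature, so $M$ (or its oriented double cover) is a round sphere of area $12\pi$. A minimal immersion of constant curvature $1/3$ into $\Sph^4$ is rigid: by the do~Carmo--Wallach / Calabi classification it is the Veronese surface. Alternatively, by the theorem of Ding--Ge--Li--Yang quoted above with $s=2$, Calabi's sphere with $S+\lambda_2=2$ in $\Sph^4$ is the Veronese surface. The nonorientable case gives the Veronese $\R P^2$, whose image is the same surface. I expect the main obstacle to be the sign bookkeeping in Step~2, rather than the appeal to the classical rigidity in Step~3.
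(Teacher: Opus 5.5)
Your proposal is correct and follows the paper's proof essentially step for step. You use the Codazzi relation $\omega_{34}=2\omega_{12}$ in the frame with $b=a$, differentiate it to get $K^\perp=2K$, and combine this with the Gauss and Ricci values $K=1-2a^2$, $K^\perp=2a^2$ to force $c=2$ and $K=\frac13$. In the closed case you then apply the Calabi--do~Carmo--Wallach classification; linear fullness in $\Sph^4$, which the paper checks separately via $\lambda_2>0$, is also forced by $K=\frac13$.

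The sign worry in Step~2 is moot. Both $\omega_{34}=2\omega_{12}$ and $K^\perp=2a^2$ are computed in the same frame, and both change sign together under $e_4\mapsto -e_4$. You should therefore drop the $K=-1$ fallback. It could not establish $c=2$ anyway for the non-closed immersions covered by the lemma's first assertion.
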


\begin{proof}
The assertion is local until the final classification, so we may pass to the
oriented double cover and work on a simply connected coordinate
neighborhood.  Put
\[
        \lambda_1=\lambda_2=\lambda.
\]
Since $S=2\lambda$ and $c=S+\lambda_2=3\lambda$, the common eigenvalue
$\lambda=\frac{c}{3}$ is a positive constant.  Choose the orthonormal normal frame
such that \eqref{eq:canonical-shapes} holds, the shape operators take the form
\begin{equation}\label{eq:superminimal-shapes}
        A^3=\begin{pmatrix}a&0\\0&-a\end{pmatrix},
        \qquad
        A^4=\begin{pmatrix}0&a\\a&0\end{pmatrix},
        \qquad
        a=\sqrt{\frac c6}>0.
\end{equation}
Indeed, the two shape operators are orthogonal and have the same squared
norm, equal to $\lambda$.

Write
\[
        \omega_{12}=p\,\omega_1+q\,\omega_2,
        \qquad
        \omega_{34}=r\,\omega_1+s\,\omega_2.
\]
The Codazzi computation used in \eqref{eq:codazzi-a-b}, now with $b=a$ and
$da=0$, gives
\[
        0=-2aq+as,
        \qquad
        0=2ap-ar.
\]
Since $a>0$, we obtain $s=2q$ and $r=2p$, and hence
\begin{equation}\label{eq:normal-tangent-connection}
        \omega_{34}=2\omega_{12}.
\end{equation}
Taking exterior derivatives and using
\eqref{eq:gauss-curvature-form} and \eqref{eq:normal-curvature-form}, we get
\[
        -K^\perp d\mu=d\omega_{34}
        =2d\omega_{12}=-2K\,d\mu,
\]
so that
\begin{equation}\label{eq:Kperp-2K}
        K^\perp=2K.
\end{equation}
On the other hand, the Gauss and Ricci equations applied to
\eqref{eq:superminimal-shapes} yield
\[
        K=1-2a^2,
        \qquad
        K^\perp=2a^2.
\]
Combining these identities with \eqref{eq:Kperp-2K} gives
\[
        2a^2=2(1-2a^2),
        \qquad
        6a^2=2.
\]
Therefore
\[
        c=6a^2=2,
        \qquad
        K=\frac13,
        \qquad
        S=\frac43,
        \qquad
        \lambda_2=\frac23.
\]

Suppose now that $M$ is closed.  The oriented double cover has constant
positive Gauss curvature and hence is a $2$-sphere by the Gauss--Bonnet
theorem.  The lifted immersion is linearly full in $\Sph^4$: otherwise its
image would lie in a totally geodesic $\Sph^3$, and the fundamental matrix
would have rank at most one, contrary to
$\lambda_1=\lambda_2=\frac{2}{3}$.  The Calabi--do Carmo--Wallach classification of
linearly full minimal $2$-spheres of constant curvature
\cite{Calabi1967,DoCarmoWallach1971} therefore identifies the image with the
Veronese surface.
\end{proof}

Now, we are ready to prove the main result.
\begin{proof}[\textbf{Proof of Theorem~\ref{thm:main}}]
Since $S\geq0$ and $\lambda_2\geq0$, one has $c\geq0$.  If $0\leq c\leq2$,
Lu's first-gap theorem applies. Then $M$ must be one of the following:
\[
\begin{array}{c|c|c|c}
\text{$M$}&S&\lambda_2&S+\lambda_2\\ \hline
\text{totally geodesic }\Sph^2&0&0&0\\
\text{Clifford torus in }\Sph^3&2&0&2\\
\text{Veronese surface in }\Sph^4&\frac43&\frac23&2
\end{array}
\]
Thus the conclusion of the theorem holds whenever $c\leq2$.

It remains to rule out $c>2$.  Suppose, to the contrary, that
$S+\lambda_2\equiv c>2$.  If $M$ is nonorientable, pass to its connected
oriented double cover.  The image, the eigenvalues of $\mathcal A$, and the
constant $c$ are unchanged.

Assume first that
\[
        \lambda_2\not\equiv0
        \qquad\text{and}\qquad
        \lambda_1\not\equiv\lambda_2.
\]
Since $M$ is closed, $\lambda_2$ attains a positive maximum at a point $x_0$.
If $x_0\in U$, then locally $\lambda_2=2b^2$, and hence
\[
        0=\nabla\lambda_2(x_0)
         =4b(x_0)\nabla b(x_0).
\]
Because $b(x_0)>0$, this contradicts Lemma~\ref{lem:grad-b}.

It remains to consider a positive maximum point in
\[
        W_+:=\{p\in M:\lambda_1(p)=\lambda_2(p)>0\}.
\]
By Lemma~\ref{lem:analytic-circular}, the set
$W=\{p\in M:\lambda_1(p)=\lambda_2(p)\}$ has empty interior in the present
case.  At $x_0\in W_+$,
\begin{equation}\label{eq:circular-lambda2}
        c=S(x_0)+\lambda_2(x_0)=3\lambda_2(x_0),
        \qquad
        \lambda_2(x_0)=\frac c3.
\end{equation}
Since $\lambda_2(x_0)>0$, the function $\lambda_2$ remains positive in a
neighborhood of $x_0$.  The empty-interior property of $W$ therefore gives a
sequence $x_j\in U$ converging to $x_0$.  By Lemma~\ref{lem:grad-b},
\[
        \lambda_2(x_j)=2b(x_j)^2<\frac c4.
\]
Passing to the limit yields
$\lambda_2(x_0)\leq \frac{c}{4}$, contradicting
\eqref{eq:circular-lambda2}.

Consequently, one of the two global alternatives must hold:
\[
        \lambda_2\equiv0
        \qquad\text{or}\qquad
        \lambda_1\equiv\lambda_2.
\]
The first alternative contradicts Lemma~\ref{lem:rank-one}, which forces
$c=S=2$.  The second contradicts Lemma~\ref{lem:superminimal}, which also
forces $c=2$.  Hence no constant value $c>2$ is possible.  Combining this
with the first-gap classification proves the theorem.
\end{proof}

   {\noindent\bf Acknowledgements }
The authors would like to thank Dr. Zhao Yuhang for pointing out that our proof yields a classification result, namely Theorem \ref{thm:main}.

\end{document}